\newtheorem{thm}{Theorem}
\newtheorem{defn}{Definition}
\newtheorem{pro}{Proposition}
\newtheorem{rk}{Remark}
\newtheorem{ex}{Example}
\numberwithin{equation}{section} \setcounter{tocdepth}{1}
\begin{document}
\title [Classification in a rotational flow of two-dimensional   algebras ]
{Classification in a rotational flow of  two-dimensional  algebras}

\author{  U.A. Rozikov, M.V. Velasco, B.A. Narkuziev }

\address{ U.Rozikov$^{a,b,c}$\begin{itemize}
 \item[$^a$] V.I.Romanovskiy Institute of Mathematics of Uzbek Academy of Sciences;
\item[$^b$] Central Asian University, 264, Milliy bog St, 111221, Tashkent,  Uzbekistan;
\item[$^c$] Faculty of Mathematics, National University of Uzbekistan.
\end{itemize}}
\email{rozikovu@yandex.ru}

\address{ M. V. Velasco, Departamento de An\'{a}lisis Matem\'{a}tico, Facultad de Ciencias, Universidad de Granada, 18071 Granada, Spain. }
\email { vvelasco@ugr.es}

\address{B.A. Narkuziev,$^{a,b}$\begin{itemize}
 \item[$^a$] Kimyo International University in Tashkent branch Samarkand, 140143, Samarkand, Uzbekistan;
\item[$^b$] Samarkand State University, 140104, University blv. 15, Samarkand, Uzbekistan.
\end{itemize}}
 \email { bnarkuziev86@gmail.com}

\begin{abstract}
In this paper, we examine a time-dependent family of two-dimensional algebras. We investigate the conditions under which any two algebras from this family, formed at different times, are isomorphic. Our findings reveal that the flow comprises of uncountable pairwise non-isomorphic algebras, including one commutative algebra. Additionally, we compare our results with a previously established classification of 2-dimensional real algebras.
\end{abstract}

\subjclass[2010] {17D92; 17D99; }

\keywords{Cubic matrix, finite-dimensional algebra, flow of algebras, Kolmogorov-Chapman equation, isomorphism of algebras.} \maketitle

\section{\textbf{Introduction}}

The paper \cite{Flow} introduces a concept of time-dependent family of finite-dimensional algebras, called as a ``flow of algebras". This flow can be seen as a continuous-time dynamical system, where the states are finite-dimensional algebras with cubic matrices of structural constants that satisfy Kolmogorov-Chapman equation (KCE) (see \cite{Flow,Cons flow,Tian}).  There are several kinds of multiplications between cubic matrices  \cite{3-Lie,Mcub}.
Therefore one has to fix a multiplication  and then consider the KCE with respect to the fixed multiplication \cite{Cons flow}.
A flow of algebras is two-parametric family, which is a generalization of (one-parametric) deformation of algebras motivated in physics (see for example \cite{Ko}) and moduli theory \cite{Ha}.

In \cite{Markov}
 Markov processes of cubic matrices are studied which are a two-parametric family of cubic stochastic matrices  satisfying the KCE.
 In the book \cite{Population}  several dynamical systems of biological models such as dynamics generated by Markov processes of cubic stochastic matrices;
  dynamics of sex-linked population; dynamical systems generated by a gonosomal evolution operator; dynamical system and an evolution algebra of mosquito population;
 and ocean ecosystems were studied. This book also gives motivations of such investigations.

In \cite{chain}  the notion of chain of evolution algebras is introduced that is a particular case of the flow of algebras.
Classification in chains of two-dimensional and three-dimensional  evolution algebras are given in recent
papers \cite{Murodov,NR}.

In this paper we consider a flow of two-dimensional algebras found in \cite{Flow}.
 We find condition (depending on time) for two  algebras of this flow to be isomorphic  at different times.
Moreover, we show that the flow comprises of uncountable pairwise non-isomorphic algebras, including one commutative algebra. We compare our results with known classification of 2-dimensional real algebras.

\subsection{Cubic matrices}

\

\

Following \cite{3-Lie,ACM,Mcub,IP} recall the notion of cubic matrix and different associative multiplication rules of cubic matrices:
a cubic matrix $Q=(q_{ijk})_{i,j,k=1}^{m}$ is a $m^{3}$-dimensional vector which can be uniquely written as
$$Q=\sum_{i,j,k=1}^{m}q_{ijk}E_{ijk},$$
where $E_{ijk}$ denotes the cubic unit (basis) matrix, i.e. $E_{ijk}$ is a $m^{3}$- cubic matrix whose $(i,j,k)$th entry is
equal to 1 and all other entries are equal to 0.

Denote by $\mathfrak{C}$ the set of all cubic matrices over a field $\mathbb{F}$. Then $\mathfrak{C}$ is an $m^{3}$-dimensional
vector space over $\mathbb{F}$, i.e. for any matrices $ A=(a_{ijk})_{i,j,k=1}^{m}, \ B=(b_{ijk})_{i,j,k=1}^{m}\in \mathfrak{C},\ \lambda\in \mathbb{F}, $
we have
$$
A+B:=(a_{ijk} + b_{ijk})_{i,j,k=1}^{m}\in \mathfrak{C}, \ \ \ \  \lambda A:=(\lambda a_{ijk})_{i,j,k=1}^{m}\in \mathfrak{C}.
$$

In \cite{ACM} (see also \cite{Cons flow})  some simple versions of multiplications between cubic matrices are given.
Denote $I=\{ 1,2,...,m\}$. Following \cite{Mcub} we define the following multiplications for basis matrices $E_{ijk}$:
\begin{equation}\label{1.1}
E_{ijk} \ast_{a} E_{lnr}=\delta_{kl}E_{ia(j,n)r},
\end{equation}
where $a:I \times I\rightarrow I, \ (j,n)\mapsto a(j,n)\in I$, is an arbitrary associative binary operation
and $\delta_{kl}$ is the Kronecker symbol.

\subsection{Two-dimensional algebras over $\mathbb{R}$}

\

\

The classification problem of finite dimensional algebras is important in algebra. In \cite{Bekbaev} author considered such problem
for two-dimensional algebras over the field of real numbers $\mathbb{R}$ and gave classifications of two-dimensional general, commutative, commutative Jordan,
 division and evolution real algebras. In \cite{ABR} authors considered the problem over algebraically closed fields.

 Now we use some notions provided in \cite{Bekbaev}. Let $\mathbb{A}$  be any 2-dimensional algebra over a field  $\mathbb{F}$
 and $e=\{e_1,e_2\}$ be a basis for $\mathbb{A}$. If $e_i\cdot e_j=A^1_{i,j}e_1 + A^2_{i,j}e_2$, $i,j=1,2$ is a multiplication table for $e$ then
 the matrix of structural constants of $\mathbb{A}$ with respect to the basis $e$ is as follows:
 \begin{equation}\label{1.2}
 \mathcal{A}=\left(\begin{array}{cccc}
 A_{1,1}^1&    A_{1,2}^1&   A_{2,1}^1&  A_{2,2}^1 \\[2mm]
 A_{1,1}^2&    A_{1,2}^2&   A_{2,1}^2&  A_{2,2}^2
\end{array} \right).
 \end{equation}
 Let $\mathbb{F}=\mathbb{R}$ and for the simplicity, we use the notation
 $$
 \mathcal{A}=\left (\begin{array}{cccc}
 \alpha_1&    \alpha_2&   \alpha_3&  \alpha_4 \\[2mm]
 \beta_1&    \beta_2&   \beta_3&  \beta_4
\end{array} \right),
 $$
 where $\alpha_1,    \alpha_2,   \alpha_3, \alpha_4, \beta_1,    \beta_2,   \beta_3,  \beta_4$ stand for any elements of $\mathbb{R}$.
\begin{thm}\label{thm1}
\cite{Bekbaev} Any non-trivial 2-dimensional real algebra is isomorphic to only one of the following listed,
by their matrices of structural constants, algebras:
 $$
 \mathcal{A}_{1}(\textbf{c})=\left (\begin{array}{cccc}
 \alpha_1&    \alpha_2&   \alpha_2 + 1&  \alpha_4 \\[2mm]
 \beta_1&    -\alpha_1&   -\alpha_1 +1&  -\alpha_2
\end{array} \right),  \ where \ \textbf{c}=(\alpha_1, \alpha_2, \alpha_4, \beta_1)\in \mathbb{R}^4,
 $$
 $$
 \mathcal{A}_{2}(\textbf{c})=\left (\begin{array}{cccc}
 \alpha_1&    0&   0&  1 \\[2mm]
 \beta_1&    \beta_2&  1 -\alpha_1 &  0
\end{array} \right),  \ where \  \beta_1\geq 0, \ \textbf{c}=(\alpha_1,  \beta_1, \beta_2)\in \mathbb{R}^3,
 $$
 $$
 \mathcal{A}_{3}(\textbf{c})=\left (\begin{array}{cccc}
 \alpha_1&    0&   0&  -1 \\[2mm]
 \beta_1&    \beta_2&  1 -\alpha_1 &  0
\end{array} \right),  \ where \  \beta_1\geq 0, \ \textbf{c}=(\alpha_1,  \beta_1, \beta_2)\in \mathbb{R}^3,
 $$
 $$
 \mathcal{A}_{4}(\textbf{c})=\left (\begin{array}{cccc}
 0&    1&   1&  0 \\[2mm]
 \beta_1&    \beta_2&  1 &  -1
\end{array} \right),  \ where \   \textbf{c}=( \beta_1, \beta_2)\in \mathbb{R}^2,
 $$
 $$
 \mathcal{A}_{5}(\textbf{c})=\left (\begin{array}{cccc}
 \alpha_1&    0&   0&  0 \\[2mm]
 0&    \beta_2&  1-\alpha_1 &  0
\end{array} \right),  \ where \   \textbf{c}=(\alpha_1, \beta_2)\in \mathbb{R}^2,
 $$
 $$
 \mathcal{A}_{6}(\textbf{c})=\left (\begin{array}{cccc}
 \alpha_1&    0&   0&  0 \\[2mm]
 1&    2\alpha_1-1&  1-\alpha_1 &  0
\end{array} \right),  \ where \   \textbf{c}=\alpha_1\in \mathbb{R},
 $$
 $$
 \mathcal{A}_{7}(\textbf{c})=\left (\begin{array}{cccc}
 \alpha_1&    0&   0&  1 \\[2mm]
 \beta_1&    1-\alpha_1&  -\alpha_1 &  0
\end{array} \right),  \ where \ \beta_1\geq 0, \textbf{c}=(\alpha_1, \beta_1)\in \mathbb{R}^2,
 $$
 $$
 \mathcal{A}_{8}(\textbf{c})=\left (\begin{array}{cccc}
 \alpha_1&    0&   0&  -1 \\[2mm]
 \beta_1&    1-\alpha_1&  -\alpha_1 &  0
\end{array} \right),  \ where \ \beta_1\geq 0, \textbf{c}=(\alpha_1, \beta_1)\in \mathbb{R}^2,
 $$
 $$
 \mathcal{A}_{9}(\textbf{c})=\left (\begin{array}{cccc}
 0&    1&   1&  0 \\[2mm]
 \beta_1&    1&   0&  -1
\end{array} \right),  \ where \   \textbf{c}=\beta_1 \in \mathbb{R},
 $$
 $$
 \mathcal{A}_{10}(\textbf{c})=\left (\begin{array}{cccc}
 \alpha_1&    0&   0&  0 \\[2mm]
 0&    1-\alpha_1&  -\alpha_1 &  0
\end{array} \right),  \ where \  \textbf{c}=\alpha_1 \in \mathbb{R},
 $$
 $$
 \mathcal{A}_{11}=\left (\begin{array}{cccc}
 \frac{1}{3}&    0&   0&  0 \\[2mm]
 1&    \frac{2}{3}&  -\frac{1}{3} &  0
\end{array} \right), \ \
\mathcal{A}_{12}=\left (\begin{array}{cccc}
 0&    1&   1&  0 \\[2mm]
 1&    0&  0 &  -1
\end{array} \right),
$$
$
\mathcal{A}_{13}=\left (\begin{array}{cccc}
 0&    1&   1&  0 \\[2mm]
 -1&    0&  0 &  -1
\end{array} \right), \
\mathcal{A}_{14}=\left (\begin{array}{cccc}
 0&    1&   1&  0 \\[2mm]
 0&    0&  0 &  -1
\end{array} \right), \
 \mathcal{A}_{15}=\left (\begin{array}{cccc}
 0&    0&   0&  0 \\[2mm]
 1&    0&  0 &  0
\end{array} \right).
$
\end{thm}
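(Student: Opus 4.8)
The statement is a complete classification up to isomorphism, so the natural framework is to realize algebra isomorphism as an explicit group action on the space of structural constants and then produce a full, irredundant set of orbit representatives. A linear isomorphism between two $2$-dimensional real algebras is a change of basis $g=\left(\begin{smallmatrix} a & b \\ c & d \end{smallmatrix}\right)\in GL_2(\mathbb{R})$, and since the multiplication is a bilinear map, i.e.\ a tensor with two covariant slots and one contravariant slot, the matrix of structural constants \eqref{1.2} transforms as
\[
\mathcal{A}\longmapsto g\,\mathcal{A}\,(g^{-1}\otimes g^{-1}),
\]
where $g^{-1}\otimes g^{-1}$ is the $4\times 4$ Kronecker product acting on the two input indices while $g$ acts on the output index. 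Thus two algebras are isomorphic exactly when their structural-constant matrices lie in the same $GL_2(\mathbb{R})$-orbit, and the theorem amounts to exhibiting canonical forms for these orbits. First I would write this action in coordinates, obtaining explicit rational expressions for the transformed entries $(\alpha_i',\beta_j')$ as functions of $a,b,c,d$ and the original eight parameters, each divided by a power of $\Delta=ad-bc$.

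To organize the orbit space I would next compute invariants and semi-invariants — quantities built from contractions of the structure tensor (the two trace forms $\operatorname{tr}(L_{e_i})$ and $\operatorname{tr}(R_{e_i})$ with entries $\sum_k A^k_{ki}$ and $\sum_k A^k_{ik}$, the commutator vector $e_1e_2-e_2e_1$, and the discriminant-type quadratic combinations) that are either fixed or scaled by a power of $\Delta$. These invariants cut the parameter space into strata on which the normal form is uniform; the occurrence of several parametrized families together with the isolated exceptional algebras $\mathcal{A}_{11}$--$\mathcal{A}_{15}$ in the list reflects exactly such a stratification.

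Working stratum by stratum, I would then use the freedom in $g$ to kill as many entries as possible and to rescale the survivors. A convenient order is to first normalize the components lying in the $e_1$-direction and then exploit the residual stabilizer to fix the remaining data. The constraints appearing in the list — the shifts $\alpha_2+1$ and $-\alpha_1+1$ in $\mathcal{A}_1$, the recurring relation $\beta_3=1-\alpha_1$, and the normalized values $0$ and $\pm1$ — are precisely the canonical entries one is left with after this reduction; each family $\mathcal{A}_i(\mathbf{c})$ is the fixed-point set of the residual group together with its remaining free parameters $\mathbf{c}$.

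The hard part is the two-sided verification that the list is both exhaustive and irredundant. Exhaustiveness requires checking that every stratum is covered, including the degenerate cases where the generic normalizing $g$ fails to exist; irredundancy requires proving that the listed forms are pairwise non-isomorphic and that within each family the parameter $\mathbf{c}$ is a genuine invariant. Over $\mathbb{R}$ the subtle point is that $GL_2(\mathbb{R})$-scaling cannot change the sign of a square, which is why several families split into $\pm$ versions (for instance $\mathcal{A}_2$ versus $\mathcal{A}_3$, and $\mathcal{A}_7$ versus $\mathcal{A}_8$) and why the sign conditions such as $\beta_1\geq 0$ are forced rather than removable. Establishing these real-field distinctions rigorously, by producing invariants that detect the relevant sign and thereby separate the orbits, is where the bulk of the careful work lies.
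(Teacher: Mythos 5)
This theorem is not proved in the paper at all: it is imported verbatim from \cite{Bekbaev}, so there is no internal argument to compare against. Your framework is the right one and is in fact the one used in the cited source --- isomorphism of $2$-dimensional algebras is the orbit equivalence of the $GL_2(\mathbb{R})$-action $\mathcal{A}\mapsto g\,\mathcal{A}\,(g^{-1}\otimes g^{-1})$ on $Mat(2\times 4;\mathbb{R})$, and the classification is a list of orbit representatives obtained by normalizing entries stratum by stratum, with the $\pm$ splittings (e.g.\ $\mathcal{A}_2$ versus $\mathcal{A}_3$) arising because real scaling cannot change the sign of a square.

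However, what you have written is a proof plan, not a proof. Every step that actually produces the content of the theorem is deferred: you say you \emph{would} compute the trace forms and discriminant-type invariants, you \emph{would} stratify, you \emph{would} use the residual stabilizer to kill entries --- but none of this is carried out, and the specific list of fifteen canonical forms (the shift $\alpha_2+1$, the recurring relation $\beta_3=1-\alpha_1$, the isolated algebras $\mathcal{A}_{11}$--$\mathcal{A}_{15}$, the exact parameter counts $\mathbb{R}^4,\mathbb{R}^3,\dots$) is never derived; it is only asserted to ``reflect'' the stratification. You yourself flag that the exhaustiveness and irredundancy verification is ``where the bulk of the careful work lies,'' and that work is precisely what a proof of this statement consists of. As it stands the proposal would not let a reader check that the list is complete, that no two listed families overlap, or even that $\mathcal{A}_1$ really is the generic stratum. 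To turn this into a proof you would need to exhibit, for each stratum, the explicit normalizing $g$ and the explicit invariants separating the families --- i.e.\ essentially reproduce the computation of \cite{Bekbaev}.
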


As shows the following example, two algebras with different parameter values, belonging to the same class as defined in Theorem \ref{thm1} may not be isomorphic.

\begin{ex} We consider the algebra $\mathcal{A}_{1}(\textbf{c})$ with parameters  $\textbf{c}$ as follows $\alpha_{1}=\alpha_{2}=\alpha_{4}=\beta_{1}=0$
and $\alpha_{1}=\alpha_{2}=\alpha_{4}=0,\ \beta_{1}=1$, then we have the following algebras
$$
 \mathcal{A}_{1}(0,0,0,0)=\left (\begin{array}{cccc}
 0&    0&    1&  0 \\[2mm]
 0&    0&   1&  0
\end{array} \right), \ \
\mathcal{A}_{1}(0,0,0,1)=\left (\begin{array}{cccc}
 0&    0&    1&  0 \\[2mm]
 1&    0&   1&  0
\end{array} \right).
 $$
 The multiplication tables for these algebras are as follows
$$
\mathcal{A}_{1}(0,0,0,0):\ e_{1}e_{1}=0, \ e_{1}e_{2}=0, \ e_{2}e_{1}=e_{1}+e_{2},\ e_{2}e_{2}=0;
$$
$$
\mathcal{A}_{1}(0,0,0,1):\ \widetilde{e}_{1}\widetilde{e}_{1}=\widetilde{e}_{2}, \ \widetilde{e}_{1}\widetilde{e}_{2}=0,\
 \widetilde{e}_{2}\widetilde{e}_{1}=\widetilde{e}_{1} + \widetilde{e}_{2},\ \widetilde{e}_{2}\widetilde{e}_{2}=0.
$$
Assume that $\mathcal{A}_{1}(0,0,0,0)$ and $ \mathcal{A}_{1}(0,0,0,1)$ are isomorphic. Then there exists a change of basis as follows
$
\left\{\begin{array}{cccc}
\widetilde{e}_{1}=x_{1}e_{1} + x_{2}e_{2}\\
\widetilde{e}_{2}=y_{1}e_{1} + y_{2}e_{2}
\end{array}\right.
$
 where $x_{1}, x_{2}, y_{1}, y_{2}\in \mathbb{R}$  and $x_{1}y_{2}\neq x_{2}y_{1}$. Hence,
 $$
 \widetilde{e}_{1}\widetilde{e}_{1}=(x_{1}e_{1} + x_{2}e_{2})(x_{1}e_{1} + x_{2}e_{2})= x_{2}x_{1}e_{2}e_{1}=x_{2}x_{1}(e_{1}+e_{2}).
 $$
On the other hand $\widetilde{e}_{1}\widetilde{e}_{1}=\widetilde{e}_{2}=y_{1}e_{1} + y_{2}e_{2}$.
It follows that $y_{1}=y_{2}=x_{2}x_{1}$. Likewise,
$
\widetilde{e}_{2}\widetilde{e}_{1}=y_{2}x_{1}e_{2}e_{1}=y_{2}x_{1}(e_{1}+e_{2})
$
and $\widetilde{e}_{2}\widetilde{e}_{1}=\widetilde{e}_{1} + \widetilde{e}_{2}=(x_{1}+y_{1})e_{1} + (x_{2}+y_{2})e_{2} $.
Then we have $ x_{1}+y_{1}=x_{2}+y_{2}=y_{2}x_{1}$, it means that $x_{1}=x_{2}$. It is a contradiction to $x_{1}y_{2}\neq x_{2}y_{1}$.
\end{ex}
Thus we have the following
\begin{rk}
The parametric algebras in a given class of Theorem \ref{thm1} may not be isomorphic for different parameters.
\end{rk}

\subsection{Flow of algebras}

\

\

Following \cite{Flow} we define a notion of flow of algebras (FA).
Consider a family
$$\{A^{[s,t]}: s,t\in \mathbb{R},\  0\leq s\leq t\}$$ of arbitrary
$m$-dimensional algebras over the field $ \mathbb{R}$, with  basis $e_{1},e_{2},\dots,e_{m}$
and  multiplication table
\begin{equation} e_{i}e_{j}=\sum_{k=1}^{m}c_{ijk}^{[s,t]}e_{k}, \ \ \ \ \ \ i,j=1, \dots , m.\end{equation}
Here parameters $s,t$ are considered as time.

Denote by $\mathcal M^{[s,t]}=\left(c_{ijk}^{[s,t]}\right)_{i,j,k=1,\dots m}$ the matrix of structural constants of  $ A^{[s,t]}$.
\begin{defn}
Fix an arbitrary multiplication of cubic matrices, say $\ast_{\mu}$.

 A family $\{A^{[s,t]}: s,t\in \mathbb{R},\  0\leq s\leq t\}$ of $m$-dimensional  algebras over the field $\mathbb R$
is called an flow of algebras (FA) of type $\mu$ if the matrices $\mathcal M^{[s,t]}$ of structural constants satisfy the Kolmogorov-Chapman equation
 (for cubic matrices):
\begin{equation}\label{Chapman-Kolmogorov}
\mathcal M^{[s,t]}=\mathcal M^{[s,\tau]} \ast_{\mu} \mathcal M^{[\tau,t]}, \,\,\,for \,\,\,all \,\,\,0\leq s<\tau<t.
\end{equation}
\end{defn}
\begin{defn}
An FA is called a time-homogeneous FA if the matrix $\mathcal M^{[s,t]}$ depends only on $t-s$. In this case we write $\mathcal M^{[t-s]}$.
\end{defn}
\begin{defn}
An FA is called periodic if its matrix $\mathcal M^{[s,t]}$ is periodic with respect to at
least one of the variables $s,\ t,$ i.e. (periodicity with respect to t) $\mathcal M^{[s,t+P]}=\mathcal M^{[s,t]}$
for all values of $t$. The constant $P$ is called the period, and is required to be nonzero.
\end{defn}

\begin{rk}
Following  \cite{Flow}	we give a comparison of FA with Deformations of Algebras: In the case of finite-dimensional algebras, a deformation refers to the transformation of a given algebra into another algebra with the same dimension \cite{Gerstenhaber}. This is equivalent to changing the multiplication table (structural constants) of the given algebra to another one. Often, the algebra at $t=0$ is considered as the initial algebra, and the algebra at the current time $t$ is considered as the current algebra. In \cite{Gerstenhaber} (see also \cite{Fox}) deformations of an algebra $\mathrm{A}$  are given by a bilinear function $f_t$. That is, one considers the algebra $\mathrm{A}_t$ with multiplication $f_t$ as the generic element of a ``one-parameter family of deformations of $\mathrm{A}$". Thus, deformations of an algebra are given by the rule $f_t$, which has an explicit form. Similarly, a flow of algebras is given by $\mathcal M^{[s,t]}$ with the rule (\ref{Chapman-Kolmogorov}).  Some relations of an FA can be also seen in the following example: a plane deformation \cite{Ogden}
is restricted to the plane described by the basis vectors $e_{1}, e_{2}$. It is known that the deformation gradient of this plane deformation has the form
$$
\mathbf{F}=\left (\begin{array}{cccc}
 \cos\theta&    \sin\theta&   0& \\[2mm]
 -\sin\theta&    \cos\theta&  0 & \\[2mm]
 0&    0&  1 &
\end{array} \right)
\left (\begin{array}{cccc}
 \lambda_{1}&    0&   0& \\[2mm]
 0&    \lambda_{2}&  0& \\[2mm]
 0&    0&  1 &
\end{array} \right),
$$
where $\theta$ is the angle of rotation and $\lambda_{1}, \ \lambda_{2}$ are the principal stretches. Comparing this matrix with matrix
(\ref{2.4}) (see below) one can see that the corresponding FA and the plane deformation have similar matrices.
\end{rk}

To construct an FA of type $\mu$ one has to solve (\ref{Chapman-Kolmogorov}).
In this paper we consider one its solution for the case $m=2$ and a fixed multiplication of cubic matrices called type $C$.

\section{\textbf{Two-dimensional flow of algebras}}
Let $m=2$ and consider  two-dimensional flow of  algebras  (see \cite{Flow}) which defined by multiplication:
\begin{equation}\label{multC}
E_{ijk} \cdot E_{lnr}=\delta_{kl}\delta_{jn}E_{ijr},
\end{equation}
where $\delta_{ij}$- Kronecker symbol.

The multiplication (\ref{multC}) and corresponding FA are called type $C$ in \cite{Flow}.
Extending this multiplication to arbitrary cubic matrices
$$ A=(a_{ijk})_{i,j,k=1}^{m}, \ \ B=(b_{ijk})_{i,j,k=1}^{m}, \ \ C=(c_{ijk})_{i,j,k=1}^{m},$$
we get that the entries of $C=AB$ can be written as
$$
c_{ijr}=\sum_{k=1}^{m}a_{ijk}b_{kjr}.
$$
Write cubic matrix $\mathcal M^{[s,t]}$ for $m=2$ in the following convenient form:
\begin{equation}\label{2.2}
\mathcal M^{[s,t]}=\left (\begin{array}{cccc}
 c_{111}^{[s,t]}&    c_{112}^{[s,t]} \\[2mm]
 c_{121}^{[s,t]}&   c_{122}^{[s,t]}
\end{array} \ \ \Bigg| \ \
 \begin{array}{cccc}
c_{211}^{[s,t]} &   c_{212}^{[s,t]}\\[2mm]
c_{221}^{[s,t]}&    c_{222}^{[s,t]}
\end{array} \right).
\end{equation}
In this case equation (\ref{Chapman-Kolmogorov}) has the following form:
$$ c_{ijr}^{[s,t]}=c_{ij1}^{[s,\tau]}c_{1jr}^{[\tau,t]} + c_{ij2}^{[s,\tau]}c_{2jr}^{[\tau,t]}, \ \ \ \ \ \ i,j,r=1,2.$$
This is a quadratic system of eight equations with eight unknown functions $c_{ijr}^{[s,t]} $ of two
variables $s,t, \ 0\leq s<t$. If we consider four equations with $j = 1$, the unknowns in them do not participate
in the other four equations with $j = 2$. Therefore the equations for $j=1$ and $j=2$
are independent. Hence it suffices to solve the system only for $j=1$. Denote
$a_{ir}^{[s,t]}=c_{i1r}^{[s,t]}$ and we have the following system:
\begin{equation}\label{2.3}
\left\{\begin{array}{cccc}
a_{11}^{[s,t]}=a_{11}^{[s,\tau]}a_{11}^{[\tau, t]} + a_{12}^{[s,\tau]}a_{21}^{[\tau, t]}\\[2mm]
a_{12}^{[s,t]}=a_{11}^{[s,\tau]}a_{12}^{[\tau, t]} + a_{12}^{[s,\tau]}a_{22}^{[\tau, t]}\\[2mm]
a_{21}^{[s,t]}=a_{21}^{[s,\tau]}a_{11}^{[\tau, t]} + a_{22}^{[s,\tau]}a_{21}^{[\tau, t]}\\[2mm]
a_{22}^{[s,t]}=a_{21}^{[s,\tau]}a_{12}^{[\tau, t]} + a_{22}^{[s,\tau]}a_{22}^{[\tau, t]}.
\end{array}\right.
\end{equation}
The full set of solutions to the system (\ref{2.3}) is not known yet. But there is a very wide class
of its solutions see \cite{chain,ImVelasco,realchains,RM}.
One of these known solutions is the following matrix:
\begin{equation}\label{2.4}
\left (\begin{array}{cccc}
a_{11}^{[s,t]}&   a_{12}^{[s,t]}\\[2mm]
a_{21}^{[s,t]}&   a_{22}^{[s,t]}
\end{array}\right)=
\left (\begin{array}{cccc}
  \cos(t-s)&  \sin(t-s)\\[2mm]
-\sin(t-s)&  \cos(t-s)
\end{array}\right).
\end{equation}

\begin{rk}
Note that the matrix (\ref{2.4}) defines a linear rotation flow \cite{Linflow,Ogden}.
\end{rk}

By $(\overline{a}_{ij}^{[s,t]})$ we denote the matrix transposed to matrix $(a_{ij}^{[s,t]})$. Any pair $(a_{ij}^{[s,t]})$, $(\overline{a}_{ij}^{[s,t]})$
 of solutions of the system (\ref{2.3}) generates an FA
of type $C$ corresponding to the matrix $\mathcal M^{[s,t]}$ with entries
$$ c_{i1r}^{[s,t]}=a_{ir}^{[s,t]}, \ \ \ \ \ c_{i2r}^{[s,t]}=\overline{a}_{ir}^{[s,t]} . $$
For the matrix (\ref{2.4}) we get the following cubic matrix, which  generates an FA of type $C$:
\begin{equation}\label{2.5}
\mathcal M^{[s,t]}=\left (\begin{array}{cccc}
 \cos(t-s)& \sin(t-s)\\[2mm]
 \cos(t-s)&  -\sin(t-s)
\end{array} \Bigg|
 \begin{array}{cccc}
-\sin(t-s)&  \cos(t-s)\\[2mm]
  \sin(t-s)&  \cos(t-s)
\end{array} \right).
\end{equation}
We note that a cubic matrix $(c_{ijk})_{i,j,k=1}^{m}$ generates a commutative algebra iff $c_{ijk}=c_{jik}$.
Hence the algebra $ A^{[s,t]}$ of the FA corresponding to the cubic matrix (\ref{2.5}) is
commutative iff $\cos(t-s)=-\sin(t-s)$, i.e. $t=s+\frac{3\pi}{4}+\pi n$, $n=0,1,2,....$
Therefore this FA  is almost non-commutative (with respect to Lebesgue measure on $\mathcal{T}=\{(s,t): 0\leq s\leq t\}$).

 Note that this FA is a time-homogeneous, therefore we can consider it
with respect to one time parameter $\widehat{t}=t-s$. So for convenience we write $ A^{[t]}$ instead of $ A^{[s,t]}$.
\begin{rk}
The FA  (\ref{2.5}) is periodic and its period is $2\pi$.
\end{rk}

\textbf {The main problem} of this paper is to classify algebras of $ A^{[t]}$, i.e. at different values of time FA
forms various algebras, we need to find the time-dependent conditions under which algebras are isomorphic.

Let $\{e_{1}, e_{2}\}$ and $\{e_{1}', e_{2}'\}$ are bases of algebras $ A^{[t_{1}]}$ and $ A^{[t_{2}]}$ respectively.
We consider the following multiplication table of basis $\{e_{1}, e_{2}\}$ corresponding to (\ref{2.5}):
\begin{equation}\label{2.6}
\begin{array}{cccc}
e_{1}e_{1}=c_{111}e_{1} + c_{112}e_{2}= \cos t_{1}e_{1} + \sin t_{1}e_{2},\\[1mm]
e_{1}e_{2}=c_{121}e_{1} + c_{122}e_{2}= \cos t_{1}e_{1} - \sin t_{1}e_{2},\\[1mm]
 \ \ e_{2}e_{1}=c_{211}e_{1} + c_{212}e_{2}= -\sin t_{1}e_{1} + \cos t_{1}e_{2},\\[1mm]
e_{2}e_{2}=c_{221}e_{1} + c_{222}e_{2}= \sin t_{1}e_{1} + \cos t_{1}e_{2}.
\end{array}
\end{equation}
Assume that $ A^{[t_{1}]}$ and $ A^{[t_{2}]}$ are isomorphic. Then there exists  a change of basis as follows
\begin{equation}\label{2.7}
\left\{\begin{array}{cccc}
e_{1}'=x_{1}e_{1} + x_{2}e_{2}\\
e_{2}'=y_{1}e_{1} + y_{2}e_{2},
\end{array}\right.
\end{equation}
where $x_{1}, x_{2}, y_{1}, y_{2}\in \mathbb{R} $ and $x_{1}y_{2}\neq x_{2}y_{1}$.

 We can also write a multiplication table similar to (\ref{2.6}) for $\{e_{1}', e_{2}'\}$. On other hand from
(\ref{2.7}) we have
$$ \begin{array}{cccc}
e_{1}'e_{1}'=x_{1}^{2}e_{1}e_{1} + x_{1}x_{2}e_{1}e_{2} + x_{2}x_{1}e_{2}e_{1} + x_{2}^{2}e_{2}e_{2} ,\\[1mm]
 \ \ \ \ \ e_{1}'e_{2}'=x_{1}y_{1}e_{1}e_{1} + x_{1}y_{2}e_{1}e_{2} + x_{2}y_{1}e_{2}e_{1} + x_{2}y_{2}e_{2}e_{2},\\[1mm]
 \ \ \ \ \ e_{2}'e_{1}'=x_{1}y_{1}e_{1}e_{1} + x_{2}y_{1}e_{1}e_{2} + x_{1}y_{2}e_{2}e_{1} + x_{2}y_{2}e_{2}e_{2},\\[1mm]
e_{2}'e_{2}'=y_{1}^{2}e_{1}e_{1} + y_{1}y_{2}e_{1}e_{2} + y_{2}y_{1}e_{2}e_{1} + y_{2}^{2}e_{2}e_{2}.
\end{array}$$
If we use (\ref{2.6}) then  after some easy calculations we get
\begin{equation}\label{2.8}
\begin{array}{cccc}
e_{1}'e_{1}'=(x_{1}^{2}\cos t_{1} + x_{1}x_{2}(\cos t_{1}-\sin t_{1}) + x_{2}^{2}\sin t_{1})e_{1} +\\[1mm]
 + (x_{1}^{2}\sin t_{1} + x_{1}x_{2}(\cos t_{1}-\sin t_{1}) + x_{2}^{2}\cos t_{1})e_{2}, \\[1mm]
 \ \ \ \ e_{1}'e_{2}'=(x_{1}y_{1}\cos t_{1} + x_{1}y_{2}\cos t_{1} - x_{2}y_{1}\sin t_{1} + x_{2}y_{2}\sin t_{1} )e_{1} + \\[1mm]
 + (x_{1}y_{1}\sin t_{1} - x_{1}y_{2}\sin t_{1} + x_{2}y_{1}\cos t_{1} + x_{2}y_{2}\cos t_{1} )e_{2} ,\\[1mm]
 \ \ \ \ e_{2}'e_{1}'=(x_{1}y_{1}\cos t_{1} + x_{2}y_{1}\cos t_{1} - x_{1}y_{2}\sin t_{1} + x_{2}y_{2}\sin t_{1} )e_{1} + \\[1mm]
 + (x_{1}y_{1}\sin t_{1} - x_{2}y_{1}\sin t_{1} + x_{1}y_{2}\cos t_{1} + x_{2}y_{2}\cos t_{1} )e_{2} ,\\[1mm]
 e_{2}'e_{2}'=(y_{1}^{2}\cos t_{1} + y_{1}y_{2}(\cos t_{1}-\sin t_{1}) + y_{2}^{2}\sin t_{1})e_{1} +\\[1mm]
 + (y_{1}^{2}\sin t_{1} + y_{1}y_{2}(\cos t_{1}-\sin t_{1}) + y_{2}^{2}\cos t_{1})e_{2}.
\end{array}
\end{equation}
If we use the multiplication table of $\{e_{1}', e_{2}'\}$ and the change (\ref{2.7}) then we have
\begin{equation}\label{2.9}
\begin{array}{cccc}
e_{1}'e_{1}'=(x_{1}\cos t_{2} + y_{1}\sin t_{2})e_{1} + (x_{2}\cos t_{2} + y_{2}\sin t_{2})e_{2},\\[1mm]
e_{1}'e_{2}'=(x_{1}\cos t_{2} - y_{1}\sin t_{2})e_{1} + (x_{2}\cos t_{2} - y_{2}\sin t_{2})e_{2},\\[1mm]
e_{2}'e_{1}'=(-x_{1}\sin t_{2} + y_{1}\cos t_{2})e_{1} + (-x_{2}\sin t_{2} + y_{2}\cos t_{2})e_{2},\\[1mm]
e_{2}'e_{2}'=(x_{1}\sin t_{2} + y_{1}\cos t_{2})e_{1} + (x_{2}\sin t_{2} + y_{2}\cos t_{2})e_{2}.
\end{array}
\end{equation}
By equating the coefficients of the corresponding terms in systems (\ref{2.8}) and (\ref{2.9}), we obtain the following system
$$ \left\{\begin{array}{cccc}
x_{1}^{2}\cos t_{1} + x_{1}x_{2}(\cos t_{1}-\sin t_{1}) + x_{2}^{2}\sin t_{1}=x_{1}\cos t_{2} + y_{1}\sin t_{2}\\[1mm]
x_{1}^{2}\sin t_{1} + x_{1}x_{2}(\cos t_{1}-\sin t_{1}) + x_{2}^{2}\cos t_{1} =x_{2}\cos t_{2} + y_{2}\sin t_{2}\\[1mm]
x_{1}y_{1}\cos t_{1} + x_{1}y_{2}\cos t_{1} - x_{2}y_{1}\sin t_{1} + x_{2}y_{2}\sin t_{1}= x_{1}\cos t_{2} - y_{1}\sin t_{2}\\[1mm]
x_{1}y_{1}\sin t_{1} - x_{1}y_{2}\sin t_{1} + x_{2}y_{1}\cos t_{1} + x_{2}y_{2}\cos t_{1} = x_{2}\cos t_{2} - y_{2}\sin t_{2}\\[1mm]
x_{1}y_{1}\cos t_{1} + x_{2}y_{1}\cos t_{1} - x_{1}y_{2}\sin t_{1} + x_{2}y_{2}\sin t_{1} = -x_{1}\sin t_{2} + y_{1}\cos t_{2} \\[1mm]
x_{1}y_{1}\sin t_{1} - x_{2}y_{1}\sin t_{1} + x_{1}y_{2}\cos t_{1} + x_{2}y_{2}\cos t_{1}=-x_{2}\sin t_{2} + y_{2}\cos t_{2} \\[1mm]
y_{1}^{2}\cos t_{1} + y_{1}y_{2}(\cos t_{1}-\sin t_{1}) + y_{2}^{2}\sin t_{1} =x_{1}\sin t_{2} + y_{1}\cos t_{2}\\[1mm]
y_{1}^{2}\sin t_{1} + y_{1}y_{2}(\cos t_{1}-\sin t_{1}) + y_{2}^{2}\cos t_{1}=x_{2}\sin t_{2} + y_{2}\cos t_{2}\\[1mm]
x_{1}y_{2}\neq x_{2}y_{1}.
\end{array}\right. $$
We write this system in the following convenient form
\begin{equation}\label{2.10}
\left\{\begin{array}{cccc}
(x_{1}^{2} + x_{1}x_{2})\cos t_{1} + (x_{2}^{2}-x_{1}x_{2})\sin t_{1}=x_{1}\cos t_{2} + y_{1}\sin t_{2}\\[1mm]
(x_{2}^{2} +x_{1}x_{2})\cos t_{1} + (x_{1}^{2} - x_{1}x_{2})\sin t_{1} =x_{2}\cos t_{2} + y_{2}\sin t_{2}\\[1mm]
(x_{1}y_{1} + x_{1}y_{2})\cos t_{1}  + (x_{2}y_{2} - x_{2}y_{1})\sin t_{1}= x_{1}\cos t_{2} - y_{1}\sin t_{2}\\[1mm]
(x_{2}y_{1} + x_{2}y_{2})\cos t_{1} + (x_{1}y_{1} - x_{1}y_{2})\sin t_{1} = x_{2}\cos t_{2} - y_{2}\sin t_{2}\\[1mm]
(x_{1}y_{1} + x_{2}y_{1})\cos t_{1}  + (x_{2}y_{2}- x_{1}y_{2})\sin t_{1} =  y_{1}\cos t_{2} - x_{1}\sin t_{2}  \\[1mm]
(x_{1}y_{2} + x_{2}y_{2})\cos t_{1} + (x_{1}y_{1} - x_{2}y_{1})\sin t_{1} = y_{2}\cos t_{2} -x_{2}\sin t_{2}  \\[1mm]
(y_{1}^{2} + y_{1}y_{2})\cos t_{1} + (y_{2}^{2} - y_{1}y_{2})\sin t_{1} = y_{1}\cos t_{2} + x_{1}\sin t_{2} \\[1mm]
(y_{2}^{2} +   y_{1}y_{2})\cos t_{1} + (y_{1}^{2} - y_{1}y_{2})\sin t_{1} =y_{2}\cos t_{2} + x_{2}\sin t_{2}  \\[1mm]
x_{1}y_{2}\neq x_{2}y_{1}.
\end{array}\right.
\end{equation}
For convenience we  denote
$$
x_{1} + x_{2}=u, \ \ y_{1} + y_{2}=v, \ \ x_{1} - x_{2}=\alpha, \ \ y_{1} - y_{2}=\beta.
$$
Then the system (\ref{2.10}) will be in the following form
\begin{equation}\label{2.11}
\left\{\begin{array}{cccc}
x_{1} u \cos t_{1} - x_{2}\alpha \sin t_{1}=x_{1}\cos t_{2} + y_{1}\sin t_{2}\\[1mm]
x_{2} u \cos t_{1} + x_{1}\alpha \sin t_{1} =x_{2}\cos t_{2} + y_{2}\sin t_{2}\\[1mm]
x_{1} v \cos t_{1}  - x_{2}\beta \sin t_{1}= x_{1}\cos t_{2} - y_{1}\sin t_{2}\\[1mm]
x_{2} v \cos t_{1} + x_{1}\beta \sin t_{1} = x_{2}\cos t_{2} - y_{2}\sin t_{2}\\[1mm]
y_{1} u \cos t_{1}  - y_{2}\alpha \sin t_{1} =  y_{1}\cos t_{2} - x_{1}\sin t_{2}  \\[1mm]
y_{2} u \cos t_{1} + y_{1}\alpha \sin t_{1} = y_{2}\cos t_{2} -x_{2}\sin t_{2}  \\[1mm]
y_{1} v \cos t_{1} -  y_{2}\beta \sin t_{1} = y_{1}\cos t_{2} + x_{1}\sin t_{2} \\[1mm]
y_{2} v \cos t_{1} +  y_{1}\beta \sin t_{1} =y_{2}\cos t_{2} + x_{2}\sin t_{2}  \\[1mm]
x_{1}y_{2}\neq x_{2}y_{1}.
\end{array}\right.
\end{equation}
 Note that $u^{2} + v^{2}\neq 0$ and $\alpha^{2} + \beta^{2}\neq 0$, otherwise $x_{1}y_{2}= x_{2}y_{1} $.
 By adding each of  two equations of system (\ref{2.11}), we obtain
 \begin{equation}\label{2.12}
\left\{\begin{array}{cccc}
u^{2}\cos t_{1} + \alpha^{2} \sin t_{1}=u \cos t_{2} + v \sin t_{2}\\[1mm]
uv \cos t_{1} + \alpha \beta \sin t_{1} =u \cos t_{2} - v \sin t_{2}\\[1mm]
uv \cos t_{1} + \alpha \beta \sin t_{1} =v \cos t_{2} - u \sin t_{2}\\[1mm]
v^{2}\cos t_{1} + \beta^{2} \sin t_{1}=v \cos t_{2} + u \sin t_{2}\\[1mm]
\alpha v\neq u\beta.
\end{array}\right.
\end{equation}
 If the system (\ref{2.10}) has a solution $\{x_1, x_2, y_1, y_2\}$, then the system (\ref{2.12}) also has a solution $\{u, v, \alpha, \beta\}$.
 By our assumption the system (\ref{2.10}) has a non-trivial solution, then the system (\ref{2.12}) also has a non-trivial solution.

 From the second and third equations of the  system (\ref{2.12}) we get
 $$\begin{array}{cccc}
  u \cos t_{2} - v \sin t_{2}= v \cos t_{2} - u \sin t_{2} , \\[1mm]
  u( \cos t_{2} + \sin t_{2}) - v(\cos t_{2} + \sin t_{2})=0.
 \end{array}$$
 Therefore
  \begin{equation}\label{2.13}
  (u-v)( \cos t_{2} + \sin t_{2})=0
 \end{equation}
 From the last equality, we have two cases.

 \textbf{Case 1.}  Let $u=v$ then $x_{1} + x_{2}= y_{1} + y_{2}=u$.

In this case the system (\ref{2.12}) will be in the following form
\begin{equation}\label{2.14}
\left\{\begin{array}{cccc}
u^{2}\cos t_{1} + \alpha^{2} \sin t_{1}=u \cos t_{2} + u \sin t_{2}\\[1mm]
u^{2} \cos t_{1} + \alpha \beta \sin t_{1} =u \cos t_{2} - u \sin t_{2}\\[1mm]
u^{2}\cos t_{1} + \beta^{2} \sin t_{1}=u \cos t_{2} + u \sin t_{2}\\[1mm]
\alpha v\neq u\beta.
\end{array}\right.
\end{equation}
Note that $x_{i}\neq y_{i}$, $i=1,2$ otherwise $x_{1}y_{2}= x_{2}y_{1}$.
From the first and third equations of this system we get
\begin{equation}\label{2.15}
 \alpha^{2} \sin t_{1}=\beta^{2} \sin t_{1}.
\end{equation}
\textbf{Case 1.1.} First we consider the case $\sin t_{1}=0$ then $\cos t_{1}=\pm 1.$

Let $ \left\{\begin{array}{cccc}
\sin t_{1}=0\\
\cos t_{1}=1
\end{array}\right.$, then from (\ref{2.14}) we have
$\left\{\begin{array}{cccc}
u^{2}=u \cos t_{2} + u \sin t_{2}\\
u^{2}=u \cos t_{2} - u \sin t_{2} \end{array}\right.$.\\
We know that $u\neq 0$, then
$$\left\{\begin{array}{cccc}
u=\cos t_{2} + \sin t_{2}\\
u=\cos t_{2} -  \sin t_{2} \end{array}\right..$$
From this system we get $\sin t_{2}=0$, then $\cos t_{2}=\pm 1$. In this case $u=\pm 1$.\\
Therefore $ \left\{\begin{array}{cccc}
x_{1} + x_{2}=\pm1\\[1mm]
 y_{1} + y_{2}=\pm1
\end{array}\right.$,
solutions of this systems are
\begin{equation}\label{2.16}
x_{1}=\gamma, \ \ x_{2}=\pm1-\gamma, \ \ y_{1}=\mu , \ \ y_{2}=\pm1-\mu, \ \ \gamma\neq \mu \ \ \gamma, \mu \in\mathbb{R}.
\end{equation}
The case
$ \left\{\begin{array}{cccc}
\sin t_{1}=0\\
\cos t_{1}=-1
\end{array}\right.$ is similar to the above case.

 In this case from  $\sin t_{1}=0$ it follows that $\sin t_{2}=0$ and solutions of the system (\ref{2.10}) are (\ref{2.16}).
And we have the following  isomorphic two-dimensional  algebras $A_{1}$ and $A_{-1}$ with structural constants matrix
$$
\mathcal M_{1}=\left (\begin{array}{cccc}
 1& 0\\[2mm]
 1&  0
\end{array} \Bigg|
 \begin{array}{cccc}
0&  1\\[2mm]
  0&  1
\end{array} \right) \qquad \text{and} \ \ \
\mathcal M_{-1}=\left (\begin{array}{cccc}
 -1& 0\\[2mm]
 -1&  0
\end{array} \Bigg|
 \begin{array}{cccc}
0&  -1\\[2mm]
  0&  -1
\end{array} \right)
\qquad \text{respectively.}
$$

Indeed if we take the change of basis $e_i'=-e_i, \ i=1,2$ then it is easy to see that they are isomorphic.

\textbf{Case 1.2.} Now we consider $\sin t_{1}\neq0$. From the equality (\ref{2.15}) we have $\alpha^{2}=\beta^{2}$.
It means that $\alpha=\pm \beta$ and note that $\beta \neq 0.$

\textbf{Case 1.2.1.} If $\alpha=\beta$, then $\alpha v= u\beta$. It is contradiction.

\textbf{Case 1.2.2.} Let $\alpha=-\beta$, then we have the following system
$ \left\{\begin{array}{cccc}
x_{1}-x_{2}=\alpha\\
x_{1}+x_{2}=u\\
y_{1}-y_{2}=-\alpha\\
y_{1}+y_{2}=u
\end{array}\right.$.\\
Solution of this system is $x_{1}=y_{2}=\frac{u+\alpha}{2}$, $x_{2}=y_{1}=\frac{u-\alpha}{2}$.
In this case
$$
\begin{vmatrix}
x_1& x_2\\
y_1& y_2
\end{vmatrix}=x_1y_2-x_2y_1=(\frac{u+\alpha}{2})^{2}-(\frac{u-\alpha}{2})^{2}=u\alpha \neq0,
$$
and the system (\ref{2.14}) will be
\begin{equation}\label{2.17}
\left\{\begin{array}{cccc}
u^{2}\cos t_{1} + \alpha^{2} \sin t_{1}=u \cos t_{2} + u \sin t_{2}\\[1mm]
u^{2} \cos t_{1} - \alpha^{2} \sin t_{1} =u \cos t_{2} - u \sin t_{2}\\[1mm]
x_{1}y_{2}\neq x_{2}y_{1}.
\end{array}\right.
\end{equation}
By adding the first two equations of this system we have
\begin{equation}\label{2.18} u \cos t_1= \cos t_2.
\end{equation}
\textbf{Case 1.2.2.1.}
Let $\cos t_1=0$, then $\cos t_2=0$. In this case the system (\ref{2.11}) will be
 \begin{equation}\label{2.19}
\left\{\begin{array}{cccc}
 - x_{2}\alpha \sin t_{1}= y_{1}\sin t_{2}\\[1mm]
 x_{1}\alpha \sin t_{1} = y_{2}\sin t_{2}\\[1mm]
 y_{2}\alpha \sin t_{1} =   x_{1}\sin t_{2}  \\[1mm]
 y_{1}\alpha \sin t_{1} =  -x_{2}\sin t_{2}  \\[1mm]
x_{1}y_{2}\neq x_{2}y_{1}.
\end{array}\right.
\end{equation}
Assume that $x_1\neq0$. We know that $x_1=y_2$, $x_2=y_1$ then we have from (\ref{2.19})  \\
 $\alpha=\frac{\sin t_2}{\sin t_1}=\pm1$, $x_2=y_1=0$ and from (\ref{2.17}) $u=\pm1$.

 In this case from  $\cos t_{1}=0$ it follows that $\cos t_{2}=0$ and solutions of the system (\ref{2.10}) are
 $x_1=y_2=\pm1$, $x_2=y_1=0$.
And we have the following isomorphic two-dimensional  algebras $A_{0}^{+}$ and $A_{0}^{-}$ with structural constants matrix

 $$
\mathcal M_{0}^{+}=\left (\begin{array}{cccc}
 0& 1\\[2mm]
 0&  -1
\end{array} \Bigg|
 \begin{array}{cccc}
-1&  0\\[2mm]
  1&  0
\end{array} \right) \qquad  \text{and} \ \ \
\mathcal M_{0}^{-}=\left (\begin{array}{cccc}
 0& -1\\[2mm]
 0&  1
\end{array} \Bigg|
 \begin{array}{cccc}
1&  0\\[2mm]
 -1&  0
\end{array} \right)
$$
respectively.

\begin{pro}
 $A_{0}^{+}$ is not isomorphic to $A_{1}$.
\end{pro}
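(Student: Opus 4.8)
The plan is to separate the two algebras by an isomorphism invariant, namely the existence of a nonzero idempotent. Recall that if $\varphi : A \to B$ is an algebra isomorphism and $x$ satisfies $x^2 = x$, then $\varphi(x)^2 = \varphi(x)$, and $\varphi(x) \neq 0$ whenever $x \neq 0$; hence ``possessing a nonzero idempotent'' is preserved under isomorphism. I would show that $A_1$ has such an element while $A_0^+$ has none.

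First I would read off the multiplication tables from the structural-constant matrices via the conventions (\ref{2.2}) and (\ref{2.6}). For $A_1$ this gives $e_1 e_1 = e_1$, $e_1 e_2 = e_1$, $e_2 e_1 = e_2$, $e_2 e_2 = e_2$, and for $A_0^+$ it gives $e_1 e_1 = e_2$, $e_1 e_2 = -e_2$, $e_2 e_1 = -e_1$, $e_2 e_2 = e_1$. In particular $e_1 e_1 = e_1$ already exhibits a nonzero idempotent in $A_1$; more is true, since writing $x = a e_1 + b e_2$ one finds $x^2 = (a+b)x$, so the whole line $a+b=1$ consists of idempotents.

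For $A_0^+$ I would take a general element $x = a e_1 + b e_2$ and compute, using the table above,
$$x^2 = b(b-a)\,e_1 + a(a-b)\,e_2.$$
Setting $x^2 = x$ yields the two scalar equations $b(b-a) = a$ and $a(a-b) = b$. Adding them gives $(a-b)^2 = a+b$, while subtracting them gives $(a-b)(a+b+1) = 0$. If $a = b$, the first relation forces $2a = 0$, so $x = 0$; if instead $a+b = -1$, then $(a-b)^2 = -1$, which has no real solution. Hence $A_0^+$ has no nonzero idempotent, and the two algebras cannot be isomorphic.

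The computation is routine; the only point requiring care—and the place where the hypothesis that the ground field is $\mathbb{R}$ is genuinely used—is the final step, where $(a-b)^2 = -1$ is ruled out precisely because we work over the reals. Over $\mathbb{C}$ this branch would produce idempotents in $A_0^+$, so the separation is not field-independent, and I would flag this explicitly to emphasize that the reality of the scalars is essential.
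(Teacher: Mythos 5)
Your proof is correct, and it takes a genuinely different route from the paper's. The paper argues directly on the change-of-basis equations: assuming an isomorphism, it specializes the system (\ref{2.10}) to $t_1=0$, $t_2=\tfrac{\pi}{2}$ (obtaining (\ref{2.20})), sums all eight equations to get $(x_1+x_2+y_1+y_2)^2=0$, combines this with $(x_1+x_2)^2=y_1+y_2$ and $(y_1+y_2)^2=x_1+x_2$ to force $x_1+x_2=y_1+y_2=0$, and then observes this contradicts $x_1y_2\neq x_2y_1$. You instead separate the algebras by the isomorphism invariant ``existence of a nonzero idempotent'': your multiplication tables agree with the paper's conventions (\ref{2.2}), (\ref{2.6}), the identity $x^2=(a+b)x$ in $A_1$ and the computation $x^2=b(b-a)e_1+a(a-b)e_2$ in $A_0^{+}$ both check out, and the case analysis $a=b$ versus $a+b=-1$ correctly rules out nonzero real idempotents in $A_0^{+}$. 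What your approach buys is a basis-free, conceptual obstruction that requires no appeal to the determinant condition and makes explicit where the hypothesis $\mathbb{F}=\mathbb{R}$ enters (your caveat is accurate as stated: over $\mathbb{C}$ this particular invariant ceases to separate the two algebras, though that alone does not decide whether they become isomorphic there). What the paper's approach buys is uniformity: it reuses the system (\ref{2.10}) that drives the entire classification in Section 2, so no new invariant needs to be introduced. Either proof stands on its own.
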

\begin{proof}
Indeed if we assume that they are isomorphic then we have a system similar to (\ref{2.10}). And this system will be in the following form:
\begin{equation}\label{2.20}
\left\{\begin{array}{cccc}
x_{1}^{2} + x_{1}x_{2}=y_1\\[1mm]
x_{2}^{2} +x_{1}x_{2}=y_2\\[1mm]
x_{1}y_{1} + x_{1}y_{2}=-y_1\\[1mm]
x_{2}y_{1} + x_{2}y_{2}=-y_2\\[1mm]
x_{1}y_{1} + x_{2}y_{1}=-x_1  \\[1mm]
x_{1}y_{2} + x_{2}y_{2}=-x_2  \\[1mm]
y_{1}^{2} + y_{1}y_{2}=x_1 \\[1mm]
y_{2}^{2} +   y_{1}y_{2}=x_2  \\[1mm]
x_{1}y_{2}\neq x_{2}y_{1}.
\end{array}\right.
\end{equation}
If we add the  equations of the system then we obtain
\begin{equation}\label{2.21}
(x_1 + x_2 +y_1 + y_2)^2=0.
\end{equation}
We get the following system by adding the first and last two equations:
$$
\left\{\begin{array}{cccc}
(x_1 + x_2)^2=y_1 + y_2\\
(y_1 + y_2)^2=x_1 + x_2 .
\end{array}\right.
$$
From this system and (2.21) we have $x_1 + x_2 = y_1 + y_2=0$.
It is contradiction to $x_{1}y_{2}\neq x_{2}y_{1}$.
\end{proof}

\textbf{Case 1.2.2.2.} Let $\cos t_1\neq0$, then $\cos t_2\neq0$. From (\ref{2.18}) we obtain
\begin{equation}\label{2.22}
u=\frac{\cos t_2}{\cos t_1}.
\end{equation}
If we use the equality (\ref{2.22}) in the system (\ref{2.11}), then we have the system (\ref{2.19}) again.
In this case $\sin t_2\neq0$ otherwise from (\ref{2.19}) we get $x_1=x_2=y_1=y_2=0$. It is contradiction.
If we assume  $x_1\neq0$ then we have $\alpha=\frac{\sin t_2}{\sin t_1}$. In this case by system (\ref{2.11}) we have  that the system (2.10) also
has a unique solution  $x_1=y_2=\alpha=u\neq0$, $x_2=y_1=0$.
It means that  the equality
$$ \frac{\sin t_2}{\sin t_1}=\frac{\cos t_2}{\cos t_1}$$
 must be satisfied for the system (\ref{2.10}) to have a solution. Consequently $\sin (t_2 - t_1)=0$, i.e. $t_2=t_1 + \pi k, \ k\in Z.$

If $x_1=0$ then we can assume $x_2\neq0$. In this case $x_1=y_2=0$, $x_2=y_1=u=-\alpha$ and also $\sin(t_2-t_1)=0$.

 So, if $\sin t_1 \sin t_2 \cos t_1 \cos t_2\neq0$ then the system (2.10) has a solution\\
  $x_1=y_2=\frac{\cos t_2}{\cos t_1}$, $x_2=y_1=0$   or $x_1=y_2=0$, $x_2=y_1=\frac{\cos t_2}{\cos t_1}$, where $t_2=t_1 + \pi k, \ k\in Z.$

In this case we have the following  two-dimensional  algebras $A_{\cos t}^{+}$ and $A_{\cos t}^{-}$ with structural constants matrix
$$
\mathcal M_{\cos t}^{+}=\left (\begin{array}{cccc}
 \cos t& \sqrt{1-\cos^{2}t}\\[2mm]
 \cos t&  -\sqrt{1-\cos^{2}t}
\end{array} \Bigg|
 \begin{array}{cccc}
-\sqrt{1-\cos^{2}t}&  \cos t\\[2mm]
  \sqrt{1-\cos^{2}t}&  \cos t
\end{array} \right) \qquad \text{and}$$
$$\mathcal M_{\cos t}^{-}=\left (\begin{array}{cccc}
 \cos t& -\sqrt{1-\cos^{2}}t\\[2mm]
 \cos t&  \sqrt{1-\cos^{2}t}
\end{array} \Bigg|
 \begin{array}{cccc}
\sqrt{1-\cos^{2}t}&  \cos t\\[2mm]
  -\sqrt{1-\cos^{2}t}&  \cos t
\end{array} \right)
\qquad \text{respectively},$$
 where $\cos t\in(-1;0)\cup(0;1)$ and the sign above $\mathcal M_{\cos t}$ represents the sign of the $\sin t$ in the matrix (\ref{2.5}).

The following theorem answers the question of whether the algebras found are different algebras.
\begin{thm}\label{thm2}
(1) $A_{\cos t}^{+}$ is isomorphic to  $A_{-\cos t}^{-}$ for any $\cos t\in(-1;1)\setminus \{0\}$.

(2) $A_{\cos t_1}^{+}$ is not isomorphic to $A_{\cos t_2}^{+}$ for any $\cos t_1,\cos t_2\in(0;1), \cos t_1\neq \cos t_2. $

(3) $A_{\cos t_1}^{-}$ is not isomorphic to $A_{\cos t_2}^{-}$ for any $\cos t_1,\cos t_2\in(0;1), \cos t_1\neq \cos t_2. $

(4) $A_{\cos t_1}^{+}$ is not isomorphic to $A_{\cos t_2}^{-}$ for any $\cos t_1,\cos t_2\in(0;1)$.
\end{thm}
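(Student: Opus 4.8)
The plan is to dispatch the single isomorphism claim (1) by an explicit change of basis, and the three non-isomorphism claims (2)--(4) by reducing them to the criterion already isolated in Case 1.2.2.2: for angles with $\sin t_1\sin t_2\cos t_1\cos t_2\neq0$ the algebras $A^{[t_1]}$ and $A^{[t_2]}$ are isomorphic if and only if $t_2=t_1+\pi k$ for some $k\in\mathbb{Z}$. The isomorphism system (\ref{2.10}) together with the factorization (\ref{2.13}) will be the workhorse.

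For (1) I would simply take $e_i'=-e_i$, $i=1,2$. This map negates every structural constant, so it sends the matrix $\mathcal M_{\cos t}^{+}$ to $-\mathcal M_{\cos t}^{+}$, that is, to the matrix obtained by replacing $\cos t$ by $-\cos t$ and $\sin t$ by $-\sin t$. Since $A_{\cos t}^{+}$ has $\sin t>0$, the image has first entry $-\cos t$ and a negative sine, and a direct comparison of the two displayed structural matrices shows $-\mathcal M_{\cos t}^{+}=\mathcal M_{-\cos t}^{-}$. This establishes (1) at once, without appealing to the case analysis.

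For (2)--(4) I would first record the dictionary between the symbols and the flow parameter: writing $c=\cos t\in(0,1)$, the algebra $A_c^{+}$ is $A^{[\tau]}$ with $\tau\in(0,\pi/2)$ and $\cos\tau=c$, while $A_c^{-}$ is $A^{[\tau]}$ with $\tau\in(-\pi/2,0)$ and $\cos\tau=c$; on each interval $\cos$ is strictly monotone, so $\tau$ is determined by $c$, and in every case $\sin\tau\cos\tau\neq0$. Once we know we sit in the branch $u=v$, the criterion of Case 1.2.2.2 then governs. In (2) both parameters lie in $(0,\pi/2)$, so $\tau_1\neq\tau_2$ and $|\tau_2-\tau_1|<\pi/2$, whence $\tau_2\neq\tau_1+\pi k$ for every $k$; case (3) is identical with $(0,\pi/2)$ replaced by $(-\pi/2,0)$. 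For (4) one parameter lies in $(0,\pi/2)$ and the other in $(-\pi/2,0)$, so their difference lies in the open interval $(0,\pi)$, which again omits every multiple of $\pi$; this also covers the borderline subcase $\cos t_1=\cos t_2$, where the difference equals $2\arccos c\in(0,\pi)$.

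The main obstacle is justifying the reduction to the branch $u=v$, that is, excluding the degenerate alternative $\cos t_2+\sin t_2=0$ in (\ref{2.13}), through which a further isomorphism might a priori slip. I would handle this by exploiting the symmetry of the isomorphism relation: since $A^{[t_1]}\cong A^{[t_2]}$ is equivalent to $A^{[t_2]}\cong A^{[t_1]}$, I am free to assign to the role of $t_2$ whichever of the two algebras satisfies $\cos t_2+\sin t_2\neq0$, and (\ref{2.13}) then forces $u=v$. In (2) both algebras have $\cos\tau+\sin\tau>0$; in (4) the $A^{+}$ factor does; and in (3), since on $(-\pi/2,0)$ one has $\cos\tau+\sin\tau=0$ only at $\tau=-\pi/4$ while $c_1\neq c_2$ prevents both parameters from equalling $-\pi/4$, at least one algebra qualifies. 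Hence in every case the reduction is legitimate, the criterion $t_2=t_1+\pi k$ applies, and the interval computations above yield the desired non-isomorphisms.
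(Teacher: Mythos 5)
Your proposal is correct and, in substance, follows the same route as the paper. Part (1) is proved by exactly the same change of basis $e_i'=-e_i$, and parts (2)--(4) rest, as in the paper, on the classification of admissible isomorphisms obtained in Case 1.2.2.2; the only difference is packaging. The paper enumerates the four surviving basis changes (identity, negation, swap, negated swap) and checks that each forces $\cos t_1=\pm\cos t_2$, whereas you translate both algebras back into the time parameter and invoke the equivalent criterion $t_2=t_1+\pi k$, finishing (2)--(4) by interval arithmetic; these are the same argument seen from two angles, since $t_2=t_1+\pi k$ is precisely the condition $\cos t_2=\pm\cos t_1$, $\sin t_2=\pm\sin t_1$ with matching signs. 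The one genuine addition in your write-up is the explicit justification for remaining in the branch $u=v$ of (\ref{2.13}): the paper's proof of Theorem \ref{thm2} cites Case 1.2.2.2 without ruling out the alternative $\cos t_2+\sin t_2=0$ treated separately as Case 2. Your symmetry device --- assign the role of $t_2$ to whichever algebra has $\cos\tau+\sin\tau\neq0$, which is always possible in (2) and (4) and, because $c_1\neq c_2$, also in (3) where only $\tau=-\pi/4$ is excluded --- closes this gap cleanly (one could equally argue via commutativity, as the paper's Case 2 does, but some care is needed there since $A^{-}_{\sqrt{2}/2}$ \emph{is} commutative). On this point your proof is slightly more careful than the printed one; otherwise the two are essentially identical.
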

\begin{proof} (1)
Let $\{e_{1},  e_{2}\}$ and $\{e_{1}^{*},  e_{2}^{*}\}$ be bases of $A_{\cos t}^{+}$ and  $A_{-\cos t}^{-}$ respectively.
If we take the  change  $e_{1}^{*}=-e_{1}$, $e_{2}^{*}=-e_{2}$ of basis then it is easy to see that they are isomorphic.

(2) Assume that $A_{\cos t_1}^{+}$ is isomorphic to $A_{\cos t_2}^{+}$ for some $\cos t_1,\cos t_2\in(0;1), \cos t_1\neq \cos t_2 $.
If $\{e_{1},  e_{2}\}$, $\{e_{1}^{*},  e_{2}^{*}\}$ be bases of $A_{\cos t_1}^{+}$ and  $A_{\cos t_2}^{+}$ respectively
then from above-mensioned results (see Case 1.2.2.2) it is easy to see that there are only the following changes of basis:
$$
(a) \left\{\begin{array}{cc} e_1^{*}=e_1\\ e_2^{*}=e_2 \end{array} \right.,
(b) \left\{\begin{array}{cc} e_1^{*}=-e_1\\ e_2^{*}=-e_2 \end{array} \right.,
(c) \left\{\begin{array}{cc} e_1^{*}=e_2\\ e_2^{*}=e_1 \end{array} \right.,
(d) \left\{\begin{array}{cc} e_1^{*}=-e_2\\ e_2^{*}=-e_1 \end{array} \right..
$$
It is not difficult to check that if we take the change of basis $(a)$ or $(c)$ then  it follows that $\cos t_1=\cos t_2$.
If we take the change of basis $(b)$ or $(d)$ then it follows that  $\cos t_1 = -\cos t_2$. It is contradiction.

(3) The proofs of (3) and (4) are  similar to the proof of (2).
\end {proof}
The first part of Theorem \ref{thm2}  means that $A_{\cos t}^{+}$ with  positive $\cos t$ (with negative $\cos t$)  is isomorphic to $A_{\cos t}^{-}$
 with negative $\cos t$ (with positive $\cos t$).

 \textbf{Case 2.}  Let  $\cos t_{2}=- \sin t_{2}$. In this case $ A^{[t_{2}]}$ is commutative.
 According to our assumption $ A^{[t_{1}]}$ is also commutative. It means that $\cos t_{1}=- \sin t_{1}$.
 Therefore in this case  $t_1, t_2 \in \{\frac{3\pi}{4}+\pi k, \  k=0,1,2,...  \}$. The possible cases in this case are as follows\\
 $$(a)\  \left\{\begin{array}{cccc}
 t_1=\frac{3\pi}{4}\\[1mm] t_2=\frac{3\pi}{4}\end{array}\right.,
  \ \ (b)\  \left\{\begin{array}{cccc}
 t_1=\frac{3\pi}{4}\\[1mm] t_2=\frac{7\pi}{4}
\end{array}\right.,
\ \ (c)\  \left\{\begin{array}{cccc}
 t_1=\frac{7\pi}{4}\\[1mm] t_2=\frac{7\pi}{4}
\end{array}\right.,
\ \ (d)\  \left\{\begin{array}{cccc}
 t_1=\frac{7\pi}{4}\\[1mm] t_2=\frac{3\pi}{4}
\end{array}\right..$$
For the cases $(a)$ and $(c)$: $\left\{\begin{array}{cccc} \cos t_1=\cos t_2\\ \sin t_1=\sin t_2 \end{array}\right.$
and for the cases $(b)$ and $(d)$: $\left\{\begin{array}{cccc} \cos t_1=-\cos t_2\\ \sin t_1=-\sin t_2 \end{array}\right.$.
 In this case from  $\cos t_{2}=-\sin t_2$ it follows that $\cos t_{1}=-\sin t_1$ and one of the solutions of the system (\ref{2.10}) is
 $x_1=y_2=1$, $x_2=y_1=0$ or $x_1=y_2=-1$, $x_2=y_1=0$.
And we have the following  two-dimensional commutative, isomorphic algebras $A_{-2}$ and $A_{2}$ with structural constants matrix
 $$
\mathcal M_{-2}=\left (\begin{array}{cccc}
 -\frac{\sqrt{2}}{2}&  \frac{\sqrt{2}}{2}\\[2mm]
  -\frac{\sqrt{2}}{2}&   -\frac{\sqrt{2}}{2}
\end{array} \Bigg|
 \begin{array}{cccc}
 -\frac{\sqrt{2}}{2}&   -\frac{\sqrt{2}}{2}\\[2mm]
   \frac{\sqrt{2}}{2}&   -\frac{\sqrt{2}}{2}
\end{array} \right) \qquad \text{and} \ \ \
\mathcal M_{2}=\left (\begin{array}{cccc}
  \frac{\sqrt{2}}{2}& -\frac{\sqrt{2}}{2}\\[2mm]
 \frac{\sqrt{2}}{2}&  \frac{\sqrt{2}}{2}
\end{array} \Bigg|
 \begin{array}{cccc}
\frac{\sqrt{2}}{2}&  \frac{\sqrt{2}}{2}\\[2mm]
 -\frac{\sqrt{2}}{2}&  \frac{\sqrt{2}}{2}
\end{array} \right)
$$
respectively.

From the above results we obtain the following theorem.
In this  theorem we give a time-depending classification of time-homogeneous FA $A^{[t]}$ corresponding to the
matrix (\ref{2.5}).
\begin{thm}\label{thm3}
$$A^{[t]} \cong \left\{\begin{array}{cccc}
A_{1},& if & t\in \{ \pi k: k\in I\}\\[2mm]
A_{0}^{+},& if & t\in \{\frac{\pi}{2}+ \pi k: k\in I\}\\[2mm]
A_{2},& if & t\in \{\frac{3\pi}{4}+ \pi k: k\in I\}\\[2mm]
A_{\cos t}^{+},&  if & t\in \bigcup_{k\in I} ( \pi k; \frac{\pi}{2}+\pi k) \\[2mm]
A_{\cos t}^{-},&  if & t\in \bigcup_{k\in I} (( \frac{\pi}{2}+\pi k; \pi + \pi k)\setminus \{\frac{3\pi}{4}+ \pi k\})
 \end{array}\right.$$
 where $ I=\{0,1,2,...\}$ is set of nonnegative integers and $\cos t \in (0;1)$.
\end{thm}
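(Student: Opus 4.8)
The plan is to treat Theorem \ref{thm3} as an assembly of the case analysis already carried out, organized around a single reduction: the $\pi$-periodicity of the flow up to isomorphism. First I would record the general fact that $A^{[t]}\cong A^{[t+\pi]}$ for every $t$. Indeed, the entries of (\ref{2.5}) satisfy $\cos(t+\pi)=-\cos t$ and $\sin(t+\pi)=-\sin t$, so passing from $t$ to $t+\pi$ negates every structural constant $c_{ijk}$; on the other hand the change of basis $e_i'=-e_i$ sends $e_i'e_j'=e_ie_j=\sum_k c_{ijk}e_k=\sum_k(-c_{ijk})e_k'$, which negates all structural constants as well. Hence $e_i'=-e_i$ realizes the isomorphism $A^{[t]}\cong A^{[t+\pi]}$, and it suffices to identify $A^{[t]}$ for $t\in[0,\pi)$ and then translate by multiples of $\pi$; this is exactly what the unions $\bigcup_{k\in I}$ in the statement encode.

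Next I would partition $[0,\pi)$ according to the vanishing and signs of $\cos t$ and $\sin t$ together with the commutativity condition $\cos t=-\sin t$, and match each piece to the representative produced earlier. For $t=0$ we have $\sin t=0$, $\cos t=1$, so Case 1.1 gives $A^{[0]}\cong A_1$; translating, $A^{[t]}\cong A_1$ on $\{\pi k\}$. For $t=\pi/2$ we have $\cos t=0$, and Case 1.2.2.1 gives $A^{[\pi/2]}\cong A_0^+$; translating yields the row $\{\tfrac{\pi}{2}+\pi k\}$. For $t=3\pi/4$ the algebra is commutative ($\cos t=-\sin t$), which is Case 2, giving $A^{[3\pi/4]}\cong A_2$ and the row $\{\tfrac{3\pi}{4}+\pi k\}$. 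All remaining $t\in[0,\pi)$ satisfy $\sin t\cos t\neq0$, so they fall under Case 1.2.2.2 and produce $A_{\cos t}^{\pm}$, with sign $+$ since $\sin t>0$ on $(0,\pi)$.

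It then remains to split this generic region by the sign of $\cos t$. On $(0,\pi/2)$ one has $\cos t\in(0,1)$, so Case 1.2.2.2 gives $A^{[t]}\cong A_{\cos t}^+$ directly, accounting for the fourth row. On $(\pi/2,\pi)\setminus\{3\pi/4\}$ one has $\cos t\in(-1,0)$, so Case 1.2.2.2 first gives $A^{[t]}\cong A_{\cos t}^+$ with a negative parameter; I would then invoke Theorem \ref{thm2}(1), namely $A_{\cos t}^+\cong A_{-\cos t}^-$, to rewrite this as $A_{-\cos t}^-$ with parameter $-\cos t\in(0,1)$. This gives the fifth row and simultaneously explains both the normalization ``$\cos t\in(0,1)$'' in the statement and the removal of the point $3\pi/4$ (which was already routed to $A_2$). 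Combining the five pieces with the $\pi$-translation of the first paragraph yields the stated list.

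Since every solution set was already computed in the preceding cases and Theorem \ref{thm2} is available, there is no genuinely hard step; the only point requiring care is the bookkeeping of the signs of $\cos t$ and $\sin t$ across the subintervals, so that the periodicity reduction and Theorem \ref{thm2}(1) are applied consistently and the subscript always lands in $(0,1)$.
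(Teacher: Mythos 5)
Your proposal is correct, and in substance it is the same proof as the paper's: Theorem \ref{thm3} is, in both cases, an assembly of the preceding case analysis (Cases 1.1, 1.2.2.1, 1.2.2.2 and Case 2) together with Theorem \ref{thm2}(1). The one genuine difference is organizational: you isolate the $\pi$-periodicity $A^{[t]}\cong A^{[t+\pi]}$ as a single up-front lemma, proved uniformly by the sign flip $e_i'=-e_i$ (which negates every structural constant, exactly matching $\cos(t+\pi)=-\cos t$, $\sin(t+\pi)=-\sin t$), and then classify only on $[0,\pi)$. The paper obtains the same translation invariance piecemeal --- $A_1\cong A_{-1}$ in Case 1.1, $A_0^+\cong A_0^-$ in Case 1.2.2.1, $A_{-2}\cong A_2$ in Case 2, and $A_{\cos t}^+\cong A_{-\cos t}^-$ in Theorem \ref{thm2}(1), each time via that very same change of basis --- while in the generic case the condition $t_2=t_1+\pi k$ falls out of solving the system (\ref{2.10}). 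Your reduction is cleaner, avoids repeating the sign-flip computation four times, and explains transparently why the subscript of $A^{\pm}_{\cos t}$ can always be normalized to lie in $(0,1)$. One caveat: like the literal wording of the theorem, your argument establishes only the displayed isomorphisms, not that the listed representatives are pairwise non-isomorphic; for the table to be a genuine classification one still needs Proposition 1 and Theorem \ref{thm2}(2)--(4), which the paper's surrounding text supplies and which you should at least invoke.
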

We can better see the essence of the theorem using the figure below
\begin{figure}[h!]
\includegraphics[width=0.95\textwidth]{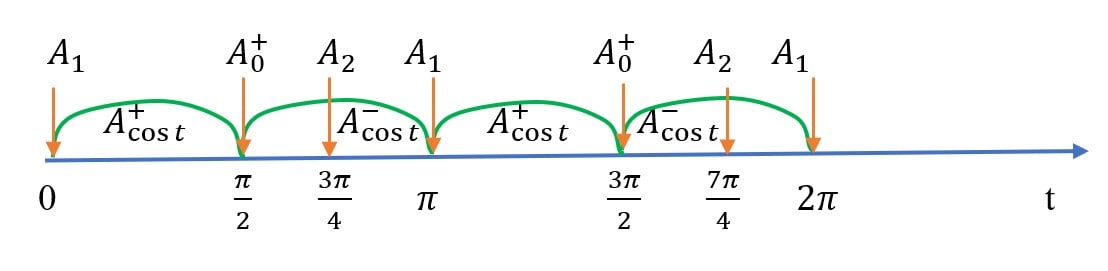}\\
\caption{The partition of the time set $\{(0,t): 0\leq t\}$ corresponding to the classification of algebras in the FA $A^{[t]}$ with the matrix (\ref{2.5}).}
\end{figure}

It is known \cite{Jacobson} that a finite-dimensional algebra $A$ with a matrix of structural constants $(c_{ijk})_{i,j,k=1}^{m}$
is associative iff
\begin{equation}\label{2.23}
\sum_{r=1}^m{c_{ijr}c_{rkl}}=\sum_{r=1}^m{c_{irl}c_{jkr}}, \ \ \ \ \ \mathrm{for\  all}\ i,j,k,l.
\end{equation}
Now we check the associativity condition for FA $A^{[t]}$ with the matrix (\ref{2.5}).
For $m=2$ the system (\ref{2.23}) will be the following form
$$
c_{ij1}c_{1kl} + c_{ij2}c_{2kl}=c_{i1l}c_{jk1} + c_{i2l}c_{jk2} \ \ \ \ \ i,j,k,l=1,2.
$$
  After some easy calculations we have the following remark.
\begin{rk}
Among the algebras given in Theorem \ref{thm3}, only $A_1$ and $A_2$ are associative. And note that  $A_2$ is associative and commutative algebra.
\end{rk}
 Does this FA include evolution algebra? Before answering this question, we recall some notions. \begin{defn} \cite{Tian}
  Let $(E,\cdot)$ be an  algebra  over  a  field  $K$. If it admits a basis $e_1,e_2...,$ such that
 $$e_{i}\cdot e_{j}=0,  \ \ \mbox{if} \ \  i\neq j \ ; $$
$$
\ e_{i}\cdot e_{i}=\sum\limits_{k} a_{ik}e_k, \ \
 \mbox{for any} \ \  i,
$$
 then this algebra is called an $evolution \  algebra$. This basis is  called natural basis.
\end{defn}
Note that an $n$-dimensional evolution algebra is a non-associative, commutative algebra. In \cite{Velasco}, necessary and sufficient conditions for a given commutative algebra to be an evolution algebra were found.

The flow we considered has only one commutative algebra $A_2$, and it is associative.
\begin{rk}
The FA $A^{[t]}$ corresponding to the matrix (\ref{2.5}) does not include any evolution algebra.
\end{rk}

\section{\textbf{Comparision with Theorem 1}}

\begin{thm}
The algebras mentioned in Theorem \ref{thm3} have the following relations with classes of Theorem \ref{thm1}:
 $$ A_1\cong \mathcal{A}_{5}(\frac{1}{2}, 0),\ \  A_0^{+}\cong  \mathcal{A}_{8}(0, 0), \ \ A_2\cong \mathcal{A}_{3}(\frac{1}{2}, 0,\frac{1}{2}),$$
  $$ A_{\cos t}^{+}\cong  \mathcal{A}_{2}(\frac{1}{2}, 0, \frac{- \sin t}{2 \cos t}), \ \ A_{\cos t}^{-} \cong  \mathcal{A}_{3}(\frac{1}{2}, 0, \frac{\sin t}{2 \cos t}).$$
\end{thm}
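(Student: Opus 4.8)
The plan is to reduce each of the five assertions to the construction of an explicit invertible change of basis followed by a routine coefficient comparison, exactly of the type already carried out in passing from (\ref{2.7})--(\ref{2.8}) to the matching system (\ref{2.10}). First I would record each algebra of Theorem \ref{thm3} in the $2\times 4$ format (\ref{1.2}) used by Theorem \ref{thm1}. Reading off (\ref{2.6}) (equivalently (\ref{2.5})), the algebra $A^{[t]}$ has structural matrix
\begin{equation*}
\mathcal{A}(t)=\left(\begin{array}{cccc}
\cos t & \cos t & -\sin t & \sin t\\[1mm]
\sin t & -\sin t & \cos t & \cos t
\end{array}\right),
\end{equation*}
and $A_1$, $A_0^{+}$, $A_2$, $A_{\cos t}^{+}$, $A_{\cos t}^{-}$ are its specialisations already displayed by $\mathcal M_1,\mathcal M_0^{+},\mathcal M_2,\mathcal M_{\cos t}^{\pm}$. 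To test an isomorphism onto a normal form $\mathcal{A}_k(\mathbf c)$ I introduce a new basis $f_i=\sum_j g_{ij}e_j$ with $g=\left(\begin{smallmatrix} x_1 & x_2\\ y_1 & y_2\end{smallmatrix}\right)$ invertible, compute the four products $f_if_j$ inside the given algebra by bilinearity, re-express them in $\{f_1,f_2\}$, and equate the coefficients with the entries of $\mathcal{A}_k(\mathbf c)$. This produces a small quadratic system in $x_1,x_2,y_1,y_2$ subject to $x_1y_2\neq x_2y_1$, whose solvability is the desired isomorphism and whose solution also fixes the parameter vector $\mathbf c$.

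For the three isolated algebras the system is solved by inspecting the bilinear product. In $A_1$ one finds $ab=\phi(b)\,a$ with $\phi(b_1e_1+b_2e_2)=b_1+b_2$; choosing $f_1$ with $\phi(f_1)=\tfrac12$ and $f_2\in\ker\phi$ (for instance $f_1=\tfrac12 e_1$, $f_2=e_1-e_2$, which is invertible) reproduces exactly the table $f_1f_1=\tfrac12 f_1$, $f_2f_1=\tfrac12 f_2$, $f_1f_2=f_2f_2=0$ of $\mathcal{A}_5(\tfrac12,0)$. The same idea applies to $A_0^{+}$, whose product has the form $ab=(b_1-b_2)J(a)$ with $J(a_1e_1+a_2e_2)=-a_2e_1+a_1e_2$: taking $f_1\in\ker(b\mapsto b_1-b_2)$ and normalising yields the table of $\mathcal{A}_8(0,0)$, and an analogous constant change of basis, respecting the commutativity of $A_2$, gives $\mathcal{A}_3(\tfrac12,0,\tfrac12)$. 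In all three cases the expected effort is only bookkeeping, and the associativity/commutativity facts recorded earlier serve as a consistency check that the target class is correct.

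The genuine work is in the two parametric families, and this is the step I expect to be most delicate. Here $g$ must depend on $t$ and must be chosen so that, after matching coefficients, the free parameter of $\mathcal{A}_2$ (resp.\ $\mathcal{A}_3$) comes out to be precisely $\beta_2=\tfrac{-\sin t}{2\cos t}$ (resp.\ $\tfrac{\sin t}{2\cos t}$) while the first columns force $\alpha_1=\tfrac12$ and $\beta_1=0$. Thus one cannot merely verify a guess: one must solve the quadratic matching system for $x_i(t),y_i(t)$ as functions of $\cos t,\sin t$ (conveniently starting from the scaled idempotent $2f_1$, since $f_1f_1=\tfrac12 f_1$ and $f_2f_2=\pm f_1$), then confirm that $x_1y_2\neq x_2y_1$ holds throughout $\cos t\in(0,1)$ and that the extracted value of $\beta_2$ is exactly the claimed fraction. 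The sign of $\sin t$ enters only through the entry $A^1_{22}=\pm1$ that distinguishes $\mathcal{A}_2$ from $\mathcal{A}_3$, which is why the $+$ family lands in $\mathcal{A}_2$ and the $-$ family in $\mathcal{A}_3$; keeping this sign consistent while reproducing the exact expression $\tfrac{\sin t}{2\cos t}$ is the crux. Once the explicit $g(t)$ is produced, the remaining verification is a finite trigonometric identity check, and the uniqueness in Theorem \ref{thm1} guarantees that these are the only possible classes, completing the proof.
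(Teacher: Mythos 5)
Your overall strategy is exactly the paper's: rewrite each algebra in the $2\times4$ format of (\ref{1.2}) and exhibit an explicit invertible change of basis that carries it to the stated normal form. For the isolated algebras your argument is complete and essentially coincides with the paper's computation: your basis $f_1=\tfrac12 e_1$, $f_2=e_1-e_2$ for $A_1$ agrees (up to a sign on $f_2$) with the paper's $e_1^*=\tfrac12 e_1$, $e_2^*=-e_1+e_2$, and your factorization $ab=(b_1-b_2)J(a)$ for $A_0^{+}$ correctly points to the paper's choice $e_1^*=-\tfrac12(e_1+e_2)$, $e_2^*=\tfrac12(e_1-e_2)$.

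The gap is in the two parametric families, which are the substantive content of the theorem: you describe the matching system and assert that it must be solved, but you never exhibit the change of basis $g(t)$ nor verify that the parameter comes out to $\mp\sin t/(2\cos t)$ — you explicitly defer this as ``the crux.'' As written, the proposal therefore does not prove the statements about $A_{\cos t}^{\pm}$ (nor about $A_2$, which you reduce to the same unexecuted computation). The missing step is closed exactly along the lines you anticipate: in $A_{\cos t}^{+}$ one computes $(e_1+e_2)(e_1+e_2)=2\cos t\,(e_1+e_2)$, $(e_1-e_2)(e_1-e_2)=2\sin t\,(e_1+e_2)$, $(e_1+e_2)(e_1-e_2)=-2\sin t\,(e_1-e_2)$ and $(e_1-e_2)(e_1+e_2)=2\cos t\,(e_1-e_2)$, so the normalization $e_1^*=\frac{1}{4\cos t}(e_1+e_2)$, $e_2^*=\frac{1}{2\sqrt{\sin 2t}}(e_1-e_2)$ (invertible for $\cos t\in(0,1)$, $\sin t>0$) yields $e_1^*e_1^*=\tfrac12 e_1^*$, $e_2^*e_2^*=e_1^*$, $e_2^*e_1^*=\tfrac12 e_2^*$ and $e_1^*e_2^*=\frac{-\sin t}{2\cos t}\,e_2^*$, which is precisely $\mathcal{A}_{2}(\tfrac12,0,\frac{-\sin t}{2\cos t})$; the $A_{\cos t}^{-}$ case is the same with the sign of the $e_2^*e_2^*$ entry flipped, landing in $\mathcal{A}_3$. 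Until such an explicit $g(t)$ is produced and these four products are checked, the two parametric claims (and the exact value of the parameter $\beta_2$, which is what distinguishes non-isomorphic members within the classes $\mathcal{A}_2$, $\mathcal{A}_3$) remain unproved.
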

\begin{proof}
If we write the matrix (\ref{2.2}) in the form of the matrix (\ref{1.2}), then the  matrix of structural constants
 (\ref{2.5}) of the time-homogeneous FA will be the following
\begin{equation}\label{3.1}
\mathcal M^{[t]}_e=\left (\begin{array}{cccc}
 \cos t& \cos t& -\sin t& \sin t\\[2mm]
  \sin t& - \sin t& \cos t& \cos t
\end{array} \right).
\end{equation}
First we consider the algebra $A_{\cos t}^{+}$. Now we use the change of basis as in \cite{Bekbaev}.
If we take a change of basis
$$e_1^*=\frac{1}{4\cos t}(e_1 + e_2), \ e_2^*=\frac{1}{2\sqrt{\sin 2t}}(e_1 - e_2)$$
 then the matrix of structural
constants of $A_{\cos t}^{+}$  will be the following
\begin{equation}\label{3.2}
\mathcal M^{+}_{\cos t, e^*}=\left (\begin{array}{cccc}
 \frac{1}{2}& 0&   0& 1 \\[2mm]
  0&  \frac{- \sin t}{2 \cos t}& \frac{1}{2}& 0
\end{array} \right).
\end{equation}
where $\sin t, \cos t \in (0;1)$.

It means that $A_{\cos t}^{+}$ is isomorphic to $\mathcal{A}_{2}(\frac{1}{2}, 0, \frac{- \sin t}{2 \cos t})$.

Similar to the above case we can show that
$A_{\cos t}^{-}$ is isomorphic to $\mathcal{A}_{3}(\frac{1}{2}, 0, \frac{\sin t}{2 \cos t})$.
Consequently $A_2$ is isomorphic to $\mathcal{A}_{3}(\frac{1}{2}, 0,\frac{1}{2})$.

Now we consider the algebras $A_1$ and $A_0^{+}$. We write their matrices of structural constants similar to (\ref{1.2}).
Then we get the following matrices respectively
\begin{equation}\label{3.4}
\mathcal M_{1,e}=\left (\begin{array}{cccc}
 1& 1&   0& 0\\[2mm]
  0&  0& 1& 1
\end{array} \right)
\end{equation}
and
\begin{equation}\label{3.5}
\mathcal M_{0,e}^{+}=\left (\begin{array}{cccc}
 0& 0&   -1& 1\\[2mm]
  1&  -1& 0& 0
\end{array} \right).
\end{equation}
If we take a change $e^*_1=\frac{1}{2}e_1, \ e^*_2=-e_1 + e_2$ for (\ref{3.4}) then we get
 $$
 \mathcal M_{1,e^*}=\left (\begin{array}{cccc}
 \frac{1}{2}& 0&   0& 0\\[2mm]
  0&  0&  \frac{1}{2}& 0
\end{array} \right).
$$
It means that   $A_1$ is isomorphic to $\mathcal{A}_{5}(\frac{1}{2}, 0)$.

If we take a change
$e^*_1=-\frac{1}{2}(e_1+e_2),\  e^*_2=\frac{1}{2}(e_1 - e_2)$
for (\ref{3.5}) then we get
$$
\mathcal M_{0,e^{*}}^{+}=\left (\begin{array}{cccc}
 0&  0&   0& -1\\[2mm]
  0&  1&  0& 0
\end{array} \right).
$$
Hence, the algebra $A_0^{+}$ is isomorphic to $\mathcal{A}_{8}(0, 0)$.
\end{proof}

\section*{Acknowledgements}
The authors thank Dr. U. Bekbaev for helpful suggestions.

 Rozikov thanks Weierstrass Institute for Applied Analysis and Stochastics (Berlin, Germany) for an invitation and support, his work was partially supported by a grant from the IMU-CDC.

Velasco was supported by Junta de Andaluc\'{i}a grant FQM-199, and  the
Spanish Ministry of Science and Innovation (MINECO/MICINN/FEDER),
through the IMAG-Maria de Maeztu Excellence Grant
CEX2020-001105M/AEI/10.13039/501100011033// CEX2020-001105-M.

Narkuziev thanks Granada university for an invitation and Agency for Innovative Development under the Ministry of higher education, science and innovations  of the Republic of Uzbekistan for support.


\begin{thebibliography}{99}
\bibitem{ABR} H. Ahmed, U. Bekbaev, I. Rakhimov, \emph{Complete classification of two-dimensional algebras}, 12 pages, AIP Conference Proceedings, 1830, 070016, 2017, doi 10.1063/1.4980965.

\bibitem{3-Lie} R. Bai, H. Liu, M. Zhang,  \emph{3-Lie algebras realized by cubic matrices}, Chin. Ann. Math. Ser. B \textbf{35} (2) (2014), 261-270.

\bibitem{Bekbaev} U. Bekbaev, \emph{Complete classifications of two-dimensional general, commutative, commutative Jordan, division and evolution real algebras},  arXiv:1705.01237v1 (2017).

\bibitem{Velasco}  Bustamante M.D., Mellon P., Velasco M.  \emph{Determining when an algebra is an evolution algebra}, Mathematics, \textbf{8} (8),1349, (2020).

\bibitem{chain} J.M. Casas, M. Ladra, U.A. Rozikov, \emph{A chain of  evolution algebras},  Linear Algebra Appl., \textbf{435} (4) (2011), 852-870.

\bibitem{Markov} J.M. Casas, M. Ladra, U.A. Rozikov, \emph{Markov processes of cubic stochastic matrices: quadratic stochastic processes}, Linear Algebra Appl., \textbf{575} (2019), 273-298.

\bibitem{Fox} T.F. Fox, \emph{An introduction to algebraic
deformation theory} J. Pure and Applied Algebra, {\bf 84} (1993) 17-41.

\bibitem{Gerstenhaber} M. Gerstenhaber, \emph{On the deformation of rings and algebras}, Ann. of Math. (2) \textbf{79} (1964), 59-103.

\bibitem{Ha} R. Hartshorne, \emph{Deformation Theory}, Springer New York, NY. 2012.
\bibitem{ImVelasco} A.N. Imomkulov, M.V. Velasco, \emph{ Chain of three-dimensional evolution algebras},  Filomat. \textbf{34}(10) (2020), 3175-3190.

\bibitem{Jacobson} N. Jacobson, \emph{Lie Algebras}, Interscience Tracts  in Pure and Applied Mathematics, vol.10 Interscience Publishers (A Division of Jhon Wiley Sons), New York-London, 1962.

\bibitem{Ko} M. Kontsevich, \emph{Deformation quantization of Poisson manifolds}, Letters in Mathematical Physics. \textbf{66} (2003),  157-216.

\bibitem{Flow} M. Ladra, U.A. Rozikov, \emph{Flow of finite-dimensional algebras}, Journal of Algebra. \textbf{470} (2017), 263-288.
\bibitem{Cons flow} M. Ladra, U.A. Rozikov, \emph{Construction of flows of finite-dimensional algebras}, Journal of Algebra. \textbf{492} (2017), 475-489.
\bibitem{ACM} M. Ladra, U.A. Rozikov,  \emph{Algebras of cubic matrices}, Linear and Multilinear Algebra. \textbf{65} (7) (2017), 1316-1328.
\bibitem{Linflow} J.M. Lilly,  \emph{Kinematics of a Fluid Ellipse in a Linear Flow}, Fluids 2018, 3(\textbf{1}), 16.
\bibitem{Mcub} V.M. Maksimov, \emph{Cubic stochastic matrices and their probability interpretations}, Theory Probab. Appl. \textbf{41} (1) (1996), 55-69.
\bibitem{Murodov} Sh.N. Murodov,  \emph{ Classification dynamics of two-dimensional chains of evolution algebras},  Internat. J. Math.\textbf{25} (2) (2014) 1450012.
\bibitem{NR} B.A. Narkuziyev, U.A. Rozikov, \emph{Classification in chains of three-dimensional real evolution algebras}, Linear and Multilinear Algebra.   https://doi.org/10.1080/03081087.2022.2025754
\bibitem{Ogden} R.W. Ogden,\emph{ Non-Linear Elastic Deformations}, Dover Publications, Inc., New York, 1984
\bibitem{realchains} B.A. Omirov, U.A. Rozikov, K.M. Tulenbayev, \emph{On real chains of evolution algebras}, Linear and Multilinear Algebra. \textbf{63} (3) (2015), 586-600.
\bibitem{IP} I. Paniello, \emph{Marginal distributions of genetic coalgebras}, J. Math. Biol. \textbf{68} (5) (2014), 1071-1087.
\bibitem{Population} U.A. Rozikov, \emph{Population dynamics: algebraic and probabilistic approach.} {\sl World Sci. Publ}. Singapore. 2020, 460 pp.
\bibitem{RM} U.A. Rozikov, Sh.N. Murodov,  \emph{Dynamics of  two-dimensional  evolution algebras}, Lobachevskii Jour. Math. \textbf{34} (4) (2013), 344-358.
\bibitem{Tian} J.P. Tian, \emph{Evolution algebras and their applications}, Lecture Notes in Mathematics, 1921, Springer-Verlag, Berlin, 2008.
\end{thebibliography}
\end{document}